\documentclass[11pt]{article}

\usepackage{amsmath,amssymb,amsthm,amsfonts,mathrsfs}
\usepackage{geometry}
\usepackage{bm}
\usepackage{color}

\numberwithin{equation}{section}
\renewcommand{\P}{\mathbb{P}}
\newtheorem{theorem}{Theorem}[section]
\newtheorem{lemma}{Lemma}[section]

\title{A Poincar\'e Inequality and Exponential Decay for the Elephant Random Walk}

\author{
Cristian Favio Coletti\thanks{
cristian.coletti@ufabc.edu.br\\
UFABC -- Centro de Matem\'atica, Computa\c{c}\~ao e Cogni\c{c}\~ao\\
Avenida dos Estados, 5001, 
Bangu, Santo Andr\'e -- SP, Brazil
}
\and
Rafael de Mattos Grisi\thanks{
rafael.grisi@ufabc.edu.br\\
UFABC -- Centro de Matem\'atica, Computa\c{c}\~ao e Cogni\c{c}\~ao\\
Avenida dos Estados, 5001, 
Bangu, Santo Andr\'e -- SP, Brazil
}
\and
Ioannis Papageorgiou\thanks{
i.papageorgiou@ufabc.edu.br, 
papyannis@yahoo.com\\
UFABC -- Centro de Matem\'atica, Computa\c{c}\~ao e Cogni\c{c}\~ao\\
Avenida dos Estados, 5001, 
Bangu, Santo Andr\'e -- SP, Brazil
}
}

\date{}

\begin{document}

\maketitle

\begin{abstract}
We study the long-time behaviour of a coninuous time one-dimensional elephant random walk confined to the finite interval [$-N,N$] with absorbing boundaries at $\pm N$. By analyzing the associated evolution equation, we identify a proper limiting operator and establish a Poincar\'e inequality with spectral gap of order $N^{-2}$. As a consequence, we obtain matching exponential upper and lower bounds for the survival probability, showing that it decays at rate $e^{-ct/N^2}$. The proof relies on a decomposition of the generator into a limiting operator and a time-dependent perturbation, together with spectral estimates. 
\end{abstract} 

\noindent\textbf{Keywords:}
Elephant r.w., Poincar\'e inequality, exponential decay, quasi-stationary behavior.

\vspace{0.2cm}

\noindent\textbf{MSC 2020:}
60G50, 60J10, 35P15.

\section{Introduction}

The elephant random walk (ERW), introduced by Sch\"utz and Trimper, is a one-dimensional random walk with complete memory. Let $(X_n)_{n\ge 0}$ denote the position of the walker, with increments $\sigma_n \in \{-1,+1\}$ and
\[
X_n = \sum_{i=1}^n \sigma_i.
\]

At each time step $n\ge 1$, the walker selects uniformly at random a past time $k \in \{1,\dots,n-1\}$ and sets
\[
\sigma_n =
\begin{cases}
\sigma_k, & \text{with probability } p, \\
-\sigma_k, & \text{with probability } 1-p.
\end{cases}
\]

As a consequence, conditioning on the current position $X_n$, the non-homogeneous transition probabilities can be written explicitly. If $X_n = k$, then
\[
\P(X_{n+1} = k+1 \mid X_n = k)
=
\frac{1}{2} +\frac{\rho}{2n} \, k,
\]
and
\[
\P(X_{n+1} = k-1 \mid X_n = k)
=
\frac{1}{2} - \frac{\rho}{2n} \, k.
\]
 These expressions show that the walk exhibits a position-dependent drift induced by its memory.  

In this work we consider a continuous-time (Poissonized) version of the elephant random walk confined to the finite interval [$-N,N$] with absorbing boundaries at the two endpoints ±N. The resulting limiting generator is naturally symmetric and corresponds to the discrete Laplacian with Dirichlet boundary conditions.

The ERW, originally introduced by Sch\"utz and Trimper \cite{SchutzTrimper2004}, has been extensively studied and exhibits deep connections with reinforced processes and P\'olya urn schemes; see, for instance, \cite{BaurBertoin2016, Bercu2017, ChauvinPouyanneSahnoun2011}. Classical probabilistic results for the ERW include central limit theorems and strong approximation principles established in \cite{Coletti2017a, Coletti2017b}.

Several works have focused on limit theorems, reinforced structures, and martingale techniques, highlighting the impact of memory on long-time behavior; see \cite{BercuChabanolRuch2019, Bertoin2022zeros, Laulin2022martingale, Guevara2019}. Related questions concerning scaling limits, superdiffusive regimes, and reinforced random walks were investigated in \cite{BercuLaulin2021, Bercu2022stops, GuerinLaulinRaschel2023, GuerinLaulinRaschel2025, GuerinLaulinRaschelPoly, Bertenghi2021, BertenghiRosales2022}.

Further developments include multidimensional extensions, generalized reinforced walks, delayed models, and phase transitions; see \cite{Qin2025, Qin2025phase, Qin2024, MaulikRoySadhukhan2024, MaulikRoySadhukhan2025, Fang2024, Fang2025, GutStadtmuellerVariations, GutStadtmuellerDelays}. Statistical aspects and estimation problems for the memory parameter were considered in \cite{BercuLaulin2024}, while broader surveys and overviews can be found in \cite{GutStadtmuellerReview, Laulin2022thesis}.

Connections with reinforced random walks, interacting particle systems, and noise-reinforced processes have also been investigated in \cite{Bertoin2020, Mukherjee2025}. We also refer to related approaches based on stochastic approximation and stochastic dynamics, including \cite{GuoWu2022, GuoWu2021, Guevara2021, BenaimLeBoudec2008}. Recently, an Elephant-reinforced neuronal model has been studied in
\cite{Papag26,NajPapSil}.

In a previous work \cite{ColettiPapageorgiou2019}, we investigated asymptotic properties of the ERW under a Poissonization of time, establishing recurrence and transience regimes together with quantitative bounds on the displacement of the walk.

In the present paper, we consider a finite-state version of the ERW in continuous timev with absorption at level $N$. Our main object of interest is the survival probability
\[
\P(\tau_N>t),
\]where 
\[
\tau_N =\inf \{ t:\vert X_t\vert=N\}
\]
where $X_t$ in the ERW at time $t$. Our main object of interest is the survival probability $
\P(\tau_N>t)$
 that is, the probability that the elephant random walk has not yet reached either absorbing boundary by time $t$. The analysis is also related to quasi-stationary behavior for absorbing Markov processes, since the long-time dynamics conditioned on non-absorption are governed by the principal eigenvalue and eigenvector of the limiting operator.

Our approach is based on a functional-analytic framework. The evolution of the probability vector $(P_k(t))_{k=-N+1}^{N-1}$ is governed by a time-dependent generator $L_t$, which can be viewed as a perturbation of a symmetric discrete Laplacian. The central result of the paper is the establishment of a Poincar\'e inequality 
\[
\sum_{k=-N+1}^{N-1} P_k(t)\,(L_t P(t))_k
\le
-\frac{c}{N^2}
\sum_{k=-N+1}^{N-1} P_k(t)^2
\]
valid for sufficiently large times. Here \(L_t\) denotes the time-dependent generator of the vector
\[
P(t)=(P_{-N+1}(t),\ldots, P_0(t),\ldots,P_{N-1}(t)),
\]
defined by the evolution equation
\[
\frac{d}{dt}P(t)=L_tP(t).
\]

Poincar\'e inequalities and spectral gap estimates play a fundamental role in the study of convergence to equilibrium for Markov processes and interacting systems; see, for instance, \cite{BakryGentilLedoux2014}. In the present setting, the inequality reflects the spectral properties of the limiting discrete Laplacian and provides a coercive estimate on the decay of the $L^2$-energy. The diffusive scale \(N^{-2}\) obtained in the exponential decay is consistent with the spectral behavior of one-dimensional random walks with absorption. The result shows that, despite the long-range memory effects of the elephant random walk, the large-scale survival dynamics remain governed by a diffusive spectral regime.

As a consequence, we obtain exponential bounds on the survival probability,
\[
\P(\tau_N > t) \asymp e^{-c t / N^2},
\]
showing that the asymptotic behavior is governed by the diffusive spectral scale $N^{-2}$. The proof relies on a perturbative analysis of the generator, showing that the deviation from the limiting operator becomes negligible at large times.

This work highlights how functional inequalities provide a natural framework for understanding the long-time behavior of memory-driven random walks with absorption, with exponential decay emerging as a direct consequence of a Poincar\'e inequality.

This work shows that, despite the long-range memory of the elephant random walk, the survival dynamics inside a finite interval are governed by the diffusive spectral properties of the discrete Dirichlet Laplacian.

\section{Kolmogorov equations and jump probabilities}

Let $(X_t)_{t \ge 0}$ denote the position of the elephant random walk. We consider a Poissonization of the discrete-time process, where the number of steps up to time $t$ is given by a Poisson process $n(t)$ of rate $1$.

Define the event
\[
A_t
=
\{|n(t)-t|<\varepsilon t\}.
\]
for some $0<\varepsilon < 1$.

We will work with the quantities
\[
\hat P_k(t) = \P(X_t = k, A_t),
\]
for $0 \le k \le N-1$. The absorbing states satisfy 
\[
\hat P_{-N}(t) = \hat P_{N}(t)=0.
\]

 \begin{lemma}\label{Ac}
Let \(n(t)\) be a Poisson process of rate \(1\), and fix
\[
0<\varepsilon<1.
\]
Then there exists a constant \(c_\varepsilon>0\) such that
\[
\P(A_t^c)
\le
2e^{-c_\varepsilon t},
\]
for all sufficiently large \(t\).
\end{lemma}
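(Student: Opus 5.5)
We prove the bound with a standard Chernoff (exponential Markov) estimate applied separately to the two tails of the Poisson variable $n(t)$, which has mean $t$. Write
\[
\P(A_t^c)\le \P\bigl(n(t)\ge (1+\varepsilon)t\bigr)+\P\bigl(n(t)\le (1-\varepsilon)t\bigr).
\]
Each tail will be bounded by $e^{-c_\varepsilon t}$, which gives the factor $2$ in the statement.

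For the upper tail, fix $\lambda>0$ and use $\mathbb{E}\,e^{\lambda n(t)}=\exp\bigl(t(e^\lambda-1)\bigr)$. Markov's inequality gives
\[
\P\bigl(n(t)\ge(1+\varepsilon)t\bigr)\le \exp\bigl(-t[\lambda(1+\varepsilon)-(e^\lambda-1)]\bigr).
\]
The choice $\lambda=\log(1+\varepsilon)$ yields the exponent $-t\,h_+(\varepsilon)$, where
\[
h_+(\varepsilon)=(1+\varepsilon)\log(1+\varepsilon)-\varepsilon .
\]

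For the lower tail, apply the same argument to $e^{-\lambda n(t)}$:
\[
\P\bigl(n(t)\le(1-\varepsilon)t\bigr)\le \exp\bigl(-t[(e^{-\lambda}-1)+\lambda(1-\varepsilon)]\bigr).
\]
The choice $\lambda=-\log(1-\varepsilon)$, which is admissible because $\varepsilon<1$, gives the exponent $-t\,h_-(\varepsilon)$, where
\[
h_-(\varepsilon)=(1-\varepsilon)\log(1-\varepsilon)+\varepsilon .
\]

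It remains to check that both rates are strictly positive; this is the only step that needs any care. The function $g(x)=x\log x-x+1$ satisfies $g(1)=0$ and $g'(x)=\log x$, so it is strictly convex with unique minimum $0$ at $x=1$. Since $h_\pm(\varepsilon)=g(1\pm\varepsilon)$, both are positive for $\varepsilon\in(0,1)$. Setting $c_\varepsilon=\min\{h_+(\varepsilon),h_-(\varepsilon)\}>0$ gives $\P(A_t^c)\le 2e^{-c_\varepsilon t}$.

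This argument in fact proves the bound for every $t>0$, so the restriction to large $t$ in the statement is not needed. There is one boundary convention to note: $A_t$ is defined with a strict inequality, so $A_t^c=\{|n(t)-t|\ge\varepsilon t\}$. This is exactly the pair of non-strict tail events bounded above, so no further adjustment is required.
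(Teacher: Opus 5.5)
Your proof is correct and is essentially the paper's argument: Chernoff bounds on the two tails of the Poisson variable, optimized to give the rates $h(\pm\varepsilon)$ with $h(x)=(1+x)\log(1+x)-x$ (your $h_\pm$) and $c_\varepsilon=\min\{h(\varepsilon),h(-\varepsilon)\}$; your convexity argument for $g$ supplies the positivity of the rates, which the paper only asserts. There is one sign slip: the lower-tail display should read $\P(n(t)\le(1-\varepsilon)t)\le\exp\bigl(t[(e^{-\lambda}-1)+\lambda(1-\varepsilon)]\bigr)$ with no leading minus sign, since as written your choice of $\lambda$ would give $e^{+t h_-(\varepsilon)}$ — the final exponent $-t\,h_-(\varepsilon)$ you state is nevertheless the correct one.
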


\begin{proof}

Recall that if \(N\) is a Poisson random variable with parameter \(t>0\), then
\[
\mathbb E[e^{sN}]
=
\exp\{t(e^s-1)\},
\qquad s\in\mathbb R.
\]

Let $Z_t=N-t$ be the centered variable
Then
\[
\mathbb E[e^{sZ}]
=
\exp\{t(e^s-1-s)\}.
\]

Using Chernoff bounds, for every \(\delta>0\),
\[
\P(Z>\delta)
\le
\inf_{s>0}
\exp\{t(e^s-1-s)-s\delta\},
\]
and
\[
\P(Z<-\delta)
\le
\inf_{s>0}
\exp\{t(e^{-s}-1+s)-s\delta\}.
\]

Optimizing over \(s\) yields
\[
\P(|N-t|>\delta)
\le
e^{-t h(\delta/t)}
+
e^{-t h(-\delta/t)},
\]
where
\[
h(x)=(1+x)\log(1+x)-x.
\]
We have
\[
\P(|N-t|>\varepsilon t)
\le
e^{-t h(\varepsilon)}
+
e^{-t h(-\varepsilon)}.
\]

Since \(0<\varepsilon<1\), both
\[
h(\varepsilon)>0,
\qquad
h(-\varepsilon)>0.
\]
Therefore, defining
\[
c_\varepsilon
=
\min\{h(\varepsilon),h(-\varepsilon)\},
\]
we conclude that
\[
\P(A_t^c)
=
\P(|n(t)-t|\ge \varepsilon t)
\le
2e^{-c_\varepsilon t}.
\]

\end{proof}
The skeleton of the continuous Elephant Random Walk is well known. 
 Define  \[
Q_t^{k,n}(1)
:=
\mathbb P
\left(
X \text{ jumps from } k \text{ to } k+1
\,\middle|\,
n(t)=n,A_t,X'_t=k
\right)
\]
and
\[
Q_t^{k,n}(-1)
:=
\mathbb P
\left(
X \text{ jumps from } k \text{ to } k-1
\,\middle|\,
n(t)=n,A_t,X'_t=k
\right),
\]
where $X'_t$ is the position imdtiately before time $t$. 
For the elephant random walk, if after $n$ discrete steps the position is $k$, then the excess of right over left increments is $k$. Hence the conditional probability of a right jump is
\[
Q_t^{k,n}(1)
=
\frac12+\frac{k\rho}{2n},
\]while the conditional probability of a left jump is
\[
Q_t^{k,n}(-1)
=
\frac12-\frac{k\rho}{2n},
\]
where $\rho=2p-1=p-q$.

Now, define similar transition probabilities $Q_t^k(1)$ and $Q_t^k(-1)$  but conditioned on the event $\{X_t=k\}\cap A_t$.

\begin{lemma}\label{lem:jump-probabilities}
Conditionally on the event $\{X_t=k\}\cap A_t$, the transition probabilities of the Poissonized elephant random walk satisfy
\[
Q_t^k(1)
=
\frac12+\frac{k\rho}{2a(t)}
\]
and
\[
Q_t^k(-1)
=
\frac12-\frac{k\rho}{2a(t)},
\]
where
\[
\frac1{a(t)}
:=
\sum_{n=\lfloor (1-\varepsilon)\rfloor }^{\lfloor(1+\varepsilon)t\rfloor}
\frac1n
\frac{\mathbb P(n(t)=n,A_t,X_t=k)}
{\mathbb P(A_t,X_t=k)} .
\]
Moreover,
\[
\frac1{\lfloor(1+\varepsilon)t\rfloor}
\le
\frac1{a(t)}
\le
\frac1{\lfloor(1-\varepsilon)t\rfloor},\]
where $\lfloor x \rfloor$ is the integer part of $x$.\end{lemma}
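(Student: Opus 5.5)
The plan is to disintegrate the conditioning event $\{X_t=k\}\cap A_t$ according to the value of the Poisson clock $n(t)$, and then average the known skeleton transition probabilities $Q_t^{k,n}(\pm1)$.

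First, on $A_t$ we have $|n(t)-t|<\varepsilon t$. Hence $n(t)$ ranges only over the integers $n$ with $(1-\varepsilon)t<n<(1+\varepsilon)t$, a set contained in $\{\lfloor(1-\varepsilon)t\rfloor,\dots,\lfloor(1+\varepsilon)t\rfloor\}$. So we can write the disjoint decomposition
\[
\{X_t=k\}\cap A_t=\bigcup_{n}\{n(t)=n\}\cap A_t\cap\{X_t=k\}.
\]
The lower index in the displayed sum of the statement is clearly meant to be $\lfloor(1-\varepsilon)t\rfloor$. For large $t$ this index is at least $1$, so the factor $1/n$ is well defined.

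Next we apply the law of total probability to the event that the next jump goes from $k$ to $k+1$:
\[
Q_t^k(1)=\sum_n Q_t^{k,n}(1)\,\frac{\P(n(t)=n,A_t,X_t=k)}{\P(A_t,X_t=k)}.
\]
Here we identify the position just before the jump, $X'_t$, with $X_t$: since the Poisson process almost surely has no jump at a fixed time $t$, the two positions coincide almost surely. We then substitute the skeleton formula $Q_t^{k,n}(1)=\tfrac12+\tfrac{k\rho}{2n}$. Let $w_n$ denote the weights $\frac{\P(n(t)=n,A_t,X_t=k)}{\P(A_t,X_t=k)}$. These are nonnegative and sum to $1$, so the constant term $\tfrac12$ survives unchanged. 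The linear term gives $\tfrac{k\rho}{2}\sum_n w_n/n=\tfrac{k\rho}{2a(t)}$ by the definition of $a(t)$. The formula for $Q_t^k(-1)$ follows identically, or simply as $1-Q_t^k(1)$.

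Finally, $1/a(t)=\sum_n w_n/n$ is a convex combination of the numbers $1/n$ for $n$ in the range above. It is therefore squeezed between the smallest value $1/\lfloor(1+\varepsilon)t\rfloor$ and the largest value $1/\lfloor(1-\varepsilon)t\rfloor$, which is exactly the claimed two-sided bound.

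The main obstacle is justifying the use of the skeleton probability inside the conditioning, namely that
\[
\P(\text{jump } k\to k+1\mid n(t)=n,A_t,X_t=k)=\tfrac12+\tfrac{k\rho}{2n}.
\]
This needs two facts. The first is that the direction of the $(n+1)$-st step depends only on the skeleton history: given $X_n=k$ it is $\tfrac12+\tfrac{k\rho}{2n}$, independently of the Poisson clock. The second is that $A_t$ is measurable with respect to $n(t)$, so once $n(t)=n$ is fixed, conditioning on $A_t$ adds nothing. I would prove both using the independence of the Poisson process from the discrete ERW.
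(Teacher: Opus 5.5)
Your proposal is correct and is essentially the paper's argument: condition on the value of $n(t)$, average the skeleton probabilities $\frac12+\frac{k\rho}{2n}$ against the weights $\P(n(t)=n,A_t,X_t=k)/\P(A_t,X_t=k)$ (which sum to one), and then bound the resulting convex combination of the $1/n$ using the range of $n$ permitted on $A_t$. Your additional remarks fill in details the paper leaves implicit: the intended lower index $\lfloor(1-\varepsilon)t\rfloor$, the almost sure identification of $X'_t$ with $X_t$, the independence of the Poisson clock from the skeleton, and the use of the integrality of $n$ to obtain the floor bounds directly.
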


\begin{proof}
We prove the formula for $Q_t^k(1)$; the proof for $Q_t^k(-1)$ is analogous.

By conditioning on the number of jumps $n(t)$, we obtain
\[
Q_t^k(1)
=
\sum_{n=\lfloor (1-\varepsilon)\rfloor }^{\lfloor(1+\varepsilon)t\rfloor}
Q_t^{k,n}(1)
\frac{\mathbb P(n(t)=n,A_t,X_t=k)}
{\mathbb P(A_t,X_t=k)}
\]
Therefore,
\[
\begin{aligned}
Q_t^k(1)
&=
\sum_{n=\lfloor (1-\varepsilon)\rfloor }^{\lfloor(1+\varepsilon)t\rfloor}
\left(
\frac12+\frac{k\rho}{2n}
\right)
\frac{\mathbb P(n(t)=n,A_t,X_t=k)}
{\mathbb P(A_t,X_t=k)}
\\
&=
\frac12
+
\frac{k\rho}2
\sum_{n=(1-\varepsilon)t}^{(1+\varepsilon)t}
\frac1n
\frac{\mathbb P(n(t)=n,A_t,X_t=k)}
{\mathbb P(A_t,X_t=k)}
\\
&=
\frac12+\frac{k\rho}{2a(t)} .
\end{aligned}
\]

Finally, on the event $A_t$ we have
\[
(1-\varepsilon)t \le n(t)\le (1+\varepsilon)t.
\]
Hence
\[
\frac1{(1+\varepsilon)t}
\le
\frac1n
\le
\frac1{(1-\varepsilon)t}.
\]
and averaging with respect to the conditional weights
\[
\frac{\mathbb P(n(t)=n,A_t,X_t=k)}
{\mathbb P(A_t,X_t=k)}
\]
gives\[
\frac1{\lfloor(1+\varepsilon)t\rfloor}
\le
\frac1{a(t)}
\le
\frac1{\lfloor(1-\varepsilon)t\rfloor}.\]
The formula for $Q_t^k(-1)$ follows in the same way.
\end{proof}

We now derive the evolution equations satisfied by $P_k(t)$.

\begin{lemma}\label{lem:kolmogorov-system}
The vector
\[
\hat P(t)=(\hat P_{-N+1}(t),\ldots,\hat P_0(t), \ldots,\hat P_{N-1}(t))
\]
is a solution of the following system of ordinary differential equations  the system

\[
\frac{d}{dt}\hat P_k(t)
=
Q_t^{k+1}(-1)\hat P_{k+1}(t)
+
Q_t^{k-1}(1)\hat P_{k-1}(t)
-
\hat P_k(t),
\qquad -N+2\le k\le N-2,
\]
and in the boundaries
\[
\frac{d}{dt}\hat P _{N-1}(t)
=
Q_t^{N-2}(1)\hat P_{N-2}(t)-\hat P_{N-1}(t)
\]
and
\[
\frac{d}{dt}\hat P _{-N+1}(t)
=
Q_t^{-N+2}(1)\hat P_{-N+2}(t)-\hat P_{-N+1}(t).
\]
Equivalently, for \(-N+2\le k\le N-2\),
\begin{align}\label{eq}
\frac{d}{dt}\hat P_k(t)
=&
\frac12\hat  P_{k+1}(t)
+
\frac12 \hat P_{k-1}(t)
-
\hat  P_k(t) \nonumber
\\
&-
\frac{(k+1)\rho}{2a(t)}\hat  P_{k+1}(t)
+
\frac{(k-1)\rho}{2a(t)}\hat P_{k-1}(t).
\end{align}

\end{lemma}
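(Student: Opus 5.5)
The plan is to derive the system as the forward Kolmogorov equation of the Poissonized walk, using the conditional jump probabilities of Lemma~\ref{lem:jump-probabilities}, and then to control the contribution of the event $A_t$ with Lemma~\ref{Ac}.

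\textbf{Step 1: balance of inflow and outflow.} Fix $k$ with $-N+1\le k\le N-1$ and a small $h>0$, and write
\[
\hat P_k(t+h)-\hat P_k(t)=\P(X_{t+h}=k,A_{t+h})-\P(X_t=k,A_t).
\]
I will decompose according to whether the Poisson clock $n(\cdot)$ rings in $(t,t+h]$. The probability of exactly one ring is $h+o(h)$, and of two or more is $o(h)$. These ring events are independent of $\mathcal F_t$, i.e.\ of $(n(t),X_t)$. Given one ring and $\{X_t=k\pm1\}\cap A_t$, the walk moves to $k$ with conditional probability $Q_t^{k\pm1}(\mp1)$, by the definition of these quantities and Lemma~\ref{lem:jump-probabilities}. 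Collecting terms gives the inflow
\[
h\bigl(Q_t^{k+1}(-1)\hat P_{k+1}(t)+Q_t^{k-1}(1)\hat P_{k-1}(t)\bigr)
\]
and the outflow $h\hat P_k(t)$, since the total jump rate is $1$.

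\textbf{Step 2: boundaries.} At the boundary sites, the neighbours $\pm N$ have $\hat P_{\pm N}\equiv0$ because they are absorbing, so the corresponding inflow term simply disappears. Mass that jumps to $\pm N$ leaves $(-N,N)$ and is therefore counted in the outflow $-\hat P_{\pm(N-1)}$. This gives the two boundary equations.

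\textbf{Step 3: the explicit form \eqref{eq}.} Substituting $Q_t^{k+1}(-1)=\tfrac12-\tfrac{(k+1)\rho}{2a(t)}$ and $Q_t^{k-1}(1)=\tfrac12+\tfrac{(k-1)\rho}{2a(t)}$ from Lemma~\ref{lem:jump-probabilities} yields \eqref{eq} directly.

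\textbf{Main obstacle: the moving event $A_t$.} The indicator of $A_t$ changes with $t$, both because $n$ jumps and because the window $(1\pm\varepsilon)t$ moves. The difference $\mathbf 1_{A_{t+h}}-\mathbf 1_{A_t}$ is nonzero only when $n(t)$ is within $O(1)$ of the window endpoints $(1\pm\varepsilon)t$. On that set, $\mathbf 1_{A_t}\neq\mathbf 1_{A_{t+h}}$ with probability $O(h)\cdot\P(|n(t)-t|\approx\varepsilon t)$. By the Chernoff bound in Lemma~\ref{Ac}, this is at most $O(h e^{-c_\varepsilon t})$.

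So the honest statement I would prove is the system above up to an additive error $r_k(t)$ with $|r_k(t)|\le Ce^{-c_\varepsilon t}$. The alternative, which I would try first, is to argue that the paper's convention tacitly treats $A_t$ as frozen over $[t,t+h]$; in that case the error is absent and the equations hold exactly. Either way, this error is harmless for the later exponential estimates, because $e^{-c_\varepsilon t}$ decays much faster than $e^{-ct/N^2}$ for fixed $N$.

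A second subtlety is that the conditional weights defining $a(t)$ depend on $k$. I will therefore keep $a(t)$ indexed by the site, writing $a_{k\pm1}(t)$ in the substitution of Step~3. Only the uniform bounds of Lemma~\ref{lem:jump-probabilities} are needed downstream.

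Finally, dividing by $h$ and letting $h\to0$ gives the system. Right-differentiability plus continuity of $\hat P_k$ then gives differentiability, with the remainder handled as described in the obstacle step.
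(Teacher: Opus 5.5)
Your derivation follows the paper's: condition on the number of Poisson rings in $(t,t+h]$, use that a ring is independent of $\mathcal F_t$, and average the step probabilities $\frac12\pm\frac{\rho X_t}{2n(t)}$ over $n(t)$ to obtain $Q_t^{k\pm1}(\mp1)$. You correctly spot two things the paper passes over.

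\emph{The moving event.} The paper's first display is an exact expansion of $\P(X_{t+h}=k,A_t)$, not of $\hat P_k(t+h)=\P(X_{t+h}=k,A_{t+h})$. So the ``frozen $A_t$'' reading you would try first is exactly the paper's tacit assumption. It differentiates a different function and does not prove the lemma for $\hat P_k$.

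\emph{The site dependence of $a(t)$.} The weights defining $1/a(t)$ are conditional on $\{X_t=k\}\cap A_t$, so \eqref{eq} must be read with $a_{k+1}(t)$ and $a_{k-1}(t)$.

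Your repair of the moving event has a step that fails. You claim that $\mathbf 1_{A_t}\neq\mathbf 1_{A_{t+h}}$ has probability $O(h)$ times something small, and you then use continuity of $\hat P_k$. But $n(\cdot)$ is integer valued while the window endpoints $(1\pm\varepsilon)t$ move deterministically, so this is false at every time $t_0$ with $(1\pm\varepsilon)t_0\in\mathbb Z$.

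Take $(1+\varepsilon)t_0=M\in\mathbb Z$. Then $\{n(t_0)=M\}$ is disjoint from $A_{t_0}$, but $\{n(t_0)=M\}\cap\{\text{no ring in }(t_0,t_0+h]\}\subset A_{t_0+h}$. Hence
\[
\hat P_k(t_0+)-\hat P_k(t_0)=\P\bigl(X_{t_0}=k,\ n(t_0)=M\bigr),
\]
which does not depend on $h$ and is in general nonzero. Symmetrically, $\hat P_k$ jumps down at times with $(1-\varepsilon)t_0\in\mathbb Z$.

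So $\hat P$ is not continuous, ``right-differentiability plus continuity'' is unavailable, and neither the exact system nor your ``system up to an additive error $r_k(t)$'' holds for all $t$. What your argument does prove is the following. On any open interval where the window contains a fixed set of integers $\{m_0,\dots,M_0\}$, $\hat P$ is $C^1$ and satisfies the system with
\[
|r_k(t)|\le \P\bigl(n(t)=m_0-1\bigr)+\P\bigl(n(t)=M_0\bigr).
\]
In addition, $\hat P_k$ has jumps of size at most $\P\bigl(n(t_0)=(1\pm\varepsilon)t_0\bigr)$ at the crossing times.

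All of these quantities are exponentially small. However, $\{n(t)=M_0\}\subset A_t$, so Lemma~\ref{Ac} does not literally apply; you must rerun its Chernoff bound with some $\varepsilon'<\varepsilon$. This piecewise statement is what should be fed into the estimates for $H$ and $\phi$. It is harmless there, but the jumps must be accounted for explicitly when you integrate the differential inequalities.

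Finally, at $k=-N+1$ your inflow formula gives $Q_t^{-N+2}(-1)\hat P_{-N+2}(t)$, not the printed $Q_t^{-N+2}(1)\hat P_{-N+2}(t)$. That is a typo in the statement, and you should flag it rather than claim to recover both boundary equations as written.
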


\begin{proof}
Over a time interval of length \(h\), the process makes one jump with probability \(h+o(h)\) and more than one jump with probability \(o(h)\). Hence, for \(1\le k\le N-2\),
\[\hat 
P_k(t+h)
=
h Q_t^{k+1}(-1)\hat P_{k+1}(t)
+
h Q_t^{k-1}(1)\hat P_{k-1}(t)
+
(1-h)\hat P_k(t)
+
o(h).
\]
The  equations on the boundaries are obtained similarly:

\[
\hat P_{N-1}(t+h)
=
hQ_t^{N-2}(1)\hat P_{N-2}(t)
+
(1-h)\hat P_{N-1}(t)
+
o(h)
\]and
\[
\hat P_{-N+1}(t+h)
=
hQ_t^{-N+2}(1)\hat P_{-N+2}(t)
+
(1-h)\hat P_{-N+1}(t)
+
o(h).
\]
Subtracting \(\hat P_k(t)\), dividing by \(h\), and letting \(h\to0\), gives the stated differential system.

Finally, using Lemma~\ref{lem:jump-probabilities}, we get (\ref{eq}).
\end{proof}

\section{Poincar\'e inequality and upper exponential decay.}

We now prove the main result of this work,  the Poincaré inequality. Then as a byproduct of this we obtain the upper exponential bound
\[
\mathbb  \P(\tau_N > t) \le C e^{-ct/N^2}.
\]

Let
\[
H(t):=\sum_{k=0}^{N-1}\hat P_k(t)^2,
\]
where
\[
 \hat P(t)=(\hat P_{-N+1}(t),\ldots,\hat P_1(t),\ldots,\hat P_{N-1}(t)).
\]

By Lemma~\ref{lem:kolmogorov-system},  the evolution equations can be written as   \[
\frac{d}{dt}\hat P(t)=L_t \hat P(t),
\]
with
\[
L_t=L_0+R(t),
\]
where $L_0$ is the limiting operator of $L_t$ and $R(t)$ is a perturbation of $L_0$. The limiting generator is given by
\[
L_0=
\begin{pmatrix}
-1 & 1/2 & 0 & \cdots & 0 \\
1/2 & -1 & 1/2 & \cdots & 0 \\
0 & 1/2 & -1 & \cdots & 0 \\
\vdots & & \ddots & \ddots & 1/2 \\
0 & \cdots & 0 & 1/2 & -1
\end{pmatrix},
\]
with absorbing boundary condition at \(N\), i.e. \(P_N=0\). Here \(L_0\) denotes the sub-generator of the process restricted to the transient states
\[
\{-N+1,\ldots,N-1\}.
\]
The first and last rows do not sum to zero because ERW is killed at the absorbing state \(N\). Thus \(L_0\) is self-adjoint on \(\mathbb R^N\), although it is not conservative. Furthermore,
\[
R(t):=L_t-L_0.
\]
This decomposition ensures that the limiting operator $L_0$ is self-adjoint.
 
\begin{lemma}\label{lem:perturbation-bound}
There exists a constant \(C>0\) such that, for all sufficiently large \(t\),
\[
\left|
\sum_{k=-N+1}^{N-1}\hat P_k (t)R(t)\hat P_k(t)
\right|
\le
\frac{C N}{t}
\sum_{k=-N+1}^{N-1}\hat P_k^2(t) .
\]
\end{lemma}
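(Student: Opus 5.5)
From the equivalent form (\ref{eq}) in Lemma~\ref{lem:kolmogorov-system}, the perturbation $R(t)=L_t-L_0$ is an explicit tridiagonal matrix with zero diagonal. For interior indices,
\[
(R(t)\hat P)_k=-\frac{(k+1)\rho}{2a(t)}\hat P_{k+1}(t)+\frac{(k-1)\rho}{2a(t)}\hat P_{k-1}(t).
\]
At the two boundary rows the same formula holds, with the missing neighbour term dropped. In every row, the coefficients multiplying $\hat P_{k\pm1}$ are of the form $\rho j/(2a(t))$ with $|j|\le N-1$.

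The plan is to bound the quadratic form $\sum_k \hat P_k (R(t)\hat P)_k$ directly by its entries. Each entry of $R(t)$ has modulus at most $|\rho|(N-1)/(2a(t))$. By Lemma~\ref{lem:jump-probabilities},
\[
\frac1{a(t)}\le \frac1{\lfloor(1-\varepsilon)t\rfloor}\le \frac{2}{(1-\varepsilon)t}
\]
for $t$ large, say $t\ge 2/(1-\varepsilon)$. It follows that
\[
\Big|\sum_k \hat P_k(R(t)\hat P)_k\Big|\le \frac{|\rho|(N-1)}{(1-\varepsilon)t}\sum_k |\hat P_k|\big(|\hat P_{k+1}|+|\hat P_{k-1}|\big),
\]
with the convention $\hat P_{\pm N}=0$.

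Next we apply Young's inequality $|ab|\le (a^2+b^2)/2$ to each product. Every index $k$ appears in at most four such products: twice as the centre and twice as a neighbour. The double sum is therefore at most $2\sum_k\hat P_k^2$. This yields the claim with
\[
C=\frac{2|\rho|}{1-\varepsilon},
\]
which is independent of $N$ and $t$. Equivalently, $\|R(t)\|_{op}\le \max_k\sum_j|R_{kj}|$ by the Schur test, and the maximal row and column sums are both $\le |\rho|N/a(t)$.

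The only point requiring care is that $a(t)$ in Lemma~\ref{lem:jump-probabilities} in principle depends on $k$, because the conditional weights involve $X_t=k$; we should write it as $a_k(t)$. This causes no difficulty, since the bound on $1/a(t)$ from Lemma~\ref{lem:jump-probabilities} is uniform in $k$, and the entrywise estimate uses nothing more. The main (mild) obstacle is checking the boundary rows and the index ranges of the sum so that no row has extra terms. Note also that the estimate is not improved by exploiting antisymmetry: $R$ is not antisymmetric, because the coefficients $-(k+1)$ and $(k-1)$ do not cancel pairwise. The crude bound of order $N/t$ is therefore the natural one.
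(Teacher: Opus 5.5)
Your proof is correct and is essentially the paper's argument. You bound the off-diagonal entries of $R(t)$ by order $N/t$ via Lemma~\ref{lem:jump-probabilities} and then apply $2|ab|\le a^2+b^2$, and your explicit constant, careful boundary rows, and observation that $a(t)$ really depends on $k$ are welcome refinements. Your closing aside on antisymmetry is wrong, though: the relevant pairing is $R_{k,k+1}=-\frac{(k+1)\rho}{2a(t)}$ with $R_{k+1,k}=\frac{k\rho}{2a(t)}$, not with the $(k-1)$-coefficient in the same row, which gives $\sum_k\hat P_k(R(t)\hat P)_k=-\frac{\rho}{2a(t)}\sum_k\hat P_k\hat P_{k+1}$. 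That is an $O(1/t)$ bound uniform in $N$ when $a(t)$ is $k$-independent, so the $N/t$ order comes only from the possible $k$-dependence of $a_k(t)$, not from any failure of near-antisymmetry.
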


\begin{proof}
From Lemma~\ref{lem:jump-probabilities},
\[
Q_t^k(1)=\frac12+\frac{k\rho}{2a(t)},
\qquad
Q_t^k(-1)=\frac12-\frac{k(\rho)}{2a(t)},
\]
with
\[
\frac1{\lfloor(1+\varepsilon)t\rfloor}
\le
\frac1{a(t)}
\le
\frac1{\lfloor(1-\varepsilon)t\rfloor}.\]

Hence, for \(-N+1\le k\le N-1\),
\[
\left|Q_t^k(\pm1)-\frac12\right|
\le
\frac{Ck}{t}
\le
\frac{CN}{t},
\]
for all sufficiently large \(t\).

Therefore, for \(-N+1\le k\le N-1\),
\[
|R(t)\hat P_k(t)|
\le
\frac{CN}{t}
\bigl(|\hat P_{k-1}(t)|+|\hat P_{k+1}(t)|\bigr),
\]
with the convention that terms outside \(\{-N+1,\ldots,N-1\}\) are omitted.

Summing over \(k\), we obtain
\[
\left|
\sum_{k=-N+1}^{N-1}\hat P_k (t)R(t)\hat P_k(t)
\right|
\le
\frac{CN}{t}
\sum_{k=-N+1}^{N-1}|\hat P_k(t)|
\bigl(|\hat P_{k-1}(t)|+|\hat P_{k+1}(t)|\bigr).
\]
Using \(2|ab|\le a^2+b^2\), it follows that
\[
\left|
\sum_{k=-N+1}^{N-1}\hat P_k(t)R(t)\hat P_k(t))
\right|
\le
\frac{CN}{t}
\sum_{k=-N+1}^{N-1}\hat P_k^2(t),
\]
which concludes the proof.
\end{proof}

We now establish a Poincar\'e inequality for the limiting operator $L_0$. This will be the key ingredient in proving exponential decay.

\begin{lemma}[Discrete Poincar\'e inequality]\label{discreetPoin}
Let
\[
f:\{-N,\ldots,N\}\rightarrow\mathbb R
\]
satisfy
\[
f(-N)=f(N)=0.
\]
Then
\[
\sum_{k=-N+1}^{N-1}f(k)^2
\le
4N^2
\sum_{k=-N+1}^{N-1}
(f(k+1)-f(k))^2.
\]
\end{lemma}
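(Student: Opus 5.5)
The plan is to represent each value $f(k)$ as a telescoping sum of increments and then apply Cauchy--Schwarz. Write $d_j := f(j+1)-f(j)$ and $D := \sum_{j=-N+1}^{N-1} d_j^2$, so that $D$ is exactly the right-hand sum of the statement divided by $4N^2$.

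The index range matters. The right-hand side contains the differences $d_j$ for $j=-N+1,\dots,N-1$, which links $-N+1$ to $-N+2$, and so on, up to $N-1$ to $N$. It does \emph{not} contain the difference $f(-N+1)-f(-N)$. So I will not telescope from the left endpoint. I will use only the right boundary condition $f(N)=0$. For $-N+1\le k\le N-1$ this gives
\[
f(k) = f(k)-f(N) = -\sum_{j=k}^{N-1} d_j ,
\]
and every increment appearing here has index $j\ge k\ge -N+1$, so it belongs to $D$.

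Cauchy--Schwarz on this sum of $N-k$ terms then gives
\[
f(k)^2 \le (N-k)\sum_{j=k}^{N-1} d_j^2 \le (N-k)\,D .
\]
Summing over $k=-N+1,\dots,N-1$, and substituting $m=N-k$ (which runs over $1,\dots,2N-1$), yields
\[
\sum_{k=-N+1}^{N-1} f(k)^2 \le D\sum_{m=1}^{2N-1} m = N(2N-1)\,D \le 2N^2 D \le 4N^2 D ,
\]
which is the claimed inequality, in fact with room to spare.

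There is no real obstacle. The only point needing care is the one above: making sure the telescoping uses only differences included in the stated right-hand sum. A symmetric argument that splits at $0$ and telescopes toward the nearer endpoint would need the missing left-boundary difference. The one-sided telescoping from $N$ avoids this at the cost of the constant $2N^2$ in place of $N^2$, and that is still within the stated bound $4N^2$.
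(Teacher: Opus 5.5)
Your proof is correct. It uses the same telescoping-plus-Cauchy--Schwarz idea as the paper, but your choice of endpoint is a real difference.

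The paper telescopes from the left, $f(k)=\sum_{y=-N}^{k-1}(f(y+1)-f(y))$, which brings in the increment $f(-N+1)-f(-N)$. Its Cauchy--Schwarz line then has a slipped index range: it pairs $\sum_{y=k}^{N-1}$ with the factor $N+k$, where $\sum_{y=-N}^{k-1}$ is meant. Its final display has $\sum_{y=-N}^{N-1}$ on the right. So what the paper actually establishes is the inequality with the full two-sided Dirichlet energy. That is weaker than the lemma as stated, although it is exactly the form used in the proof of Theorem~\ref{theo:limiting-poincare}.

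By telescoping only from $N$, you use exactly the increments that appear in the stated right-hand side, so you prove the lemma as written. Your argument needs only $f(N)=0$. Summing $N-k$ exactly, rather than bounding $N+k\le 2N$, gives the constant $N(2N-1)\le 2N^2$ instead of $4N^2$.

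Your version also implies the paper's, since adding $(f(-N+1)-f(-N))^2$ only enlarges the right-hand side. The left-endpoint route's only advantage is that it matches the symmetric energy $-2\langle P,L_0P\rangle$ directly.
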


\begin{proof}
For $-N+1\le k\le N-1$,
\[
f(k)
=
\sum_{y=-N}^{k-1}(f(y+1)-f(y)).
\]
Applying Cauchy--Schwarz,
\begin{align*}
|f(k)|^2
&\le
(N+k)\sum_{y=k}^{N-1}(f(y+1)-f(y))^2\\ &\leq 2N\sum_{y=k}^{N-1}(f(y+1)-f(y))^2
\end{align*}
where above we used $k+N\leq 2N$.
Summing over the  interior sites,
\[
\sum_{k=-N+1}^{N-1} f(k)^2
\le 4N^2
\sum_{y=-N}^{N-1}(f(y+1)-f(y))^2.
\]

\end{proof}
Now we can prove the  main result of the paper.
\begin{theorem}[Poincar\'e inequality]\ \label{theo:limiting-poincare}
There exists a constant \(c>0\) such that, for every
\(P=(P_0,\ldots,P_{N-1})\),
\[
\sum_{k=0}^{N-1}P_k(t)(L_0P)_k(t)
\le
-\frac{c}{N^2}\sum_{k=0}^{N-1}P_k^2(t).
\]
\end{theorem}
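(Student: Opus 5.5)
The plan is a summation by parts that turns the quadratic form of $L_0$ into a Dirichlet energy, followed by the discrete Poincar\'e inequality of Lemma~\ref{discreetPoin}. Although the statement writes the index range as $0,\ldots,N-1$, I read it, as in the setup of $L_0$, on the transient states $\{-N+1,\ldots,N-1\}$ with Dirichlet conditions at $\pm N$. The argument is identical on any interval of $O(N)$ sites; only the constant changes.

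First I extend $P$ by setting $P_{-N}=P_N=0$. Then for every interior $k$,
\[
(L_0P)_k=\tfrac12 P_{k+1}+\tfrac12 P_{k-1}-P_k=\tfrac12\bigl(P_{k+1}-2P_k+P_{k-1}\bigr),
\]
and this formula also covers the two boundary rows, because the missing neighbour is zero. Next I rewrite the quadratic form. Expanding $\sum_k P_k(P_{k+1}-2P_k+P_{k-1})$ and reindexing, with the boundary zeros absorbing the edge terms, gives
\[
\sum_{k=-N+1}^{N-1}P_k(L_0P)_k=-\frac12\sum_{k=-N}^{N-1}(P_{k+1}-P_k)^2 .
\]
The only care needed here is bookkeeping at the endpoints. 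This is where the Dirichlet condition is used: without it, boundary terms of the form $P_{\pm N}(\cdot)$ would survive.

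Finally I apply Lemma~\ref{discreetPoin} with $f(k)=P_k$, which satisfies $f(\pm N)=0$. Its proof sums the increments over $y$ from $-N$ to $N-1$, so it controls $\sum_k P_k^2$ by the full Dirichlet energy above:
\[
\sum_{k=-N+1}^{N-1}P_k^2\le 4N^2\sum_{k=-N}^{N-1}(P_{k+1}-P_k)^2 .
\]
Combining the two displays gives
\[
\sum_k P_k(L_0P)_k\le -\frac{1}{8N^2}\sum_k P_k^2,
\]
so the theorem holds with $c=1/8$. Nothing here depends on $t$, since $L_0$ is time independent.

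The main obstacle is purely the index and boundary bookkeeping in the summation by parts. In particular, the Dirichlet energy must include both boundary edges, $(P_{-N+1}-0)^2$ and $(0-P_{N-1})^2$, so that it matches the form in which Lemma~\ref{discreetPoin} is used. As a cross-check on the sharpness of the order $N^{-2}$, I could compare with the explicit spectrum of $L_0$. Its eigenvalues are $-1+\cos\bigl(\pi j/(2N)\bigr)$ for $j=1,\ldots,2N-1$, so the top eigenvalue is about $-\pi^2/(8N^2)$. This confirms the rate and could replace the Poincar\'e argument entirely.
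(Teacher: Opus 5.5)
Your proposal is correct and follows essentially the same route as the paper. You extend by $P_{\pm N}=0$, sum by parts to get $\sum_k P_k(L_0P)_k=-\frac12\sum_{k=-N}^{N-1}(P_{k+1}-P_k)^2$, and apply Lemma~\ref{discreetPoin} on the index range $\{-N+1,\ldots,N-1\}$, exactly as the paper's proof does; your explicit constant $c=1/8$ is consistent with the exact gap $1-\cos(\pi/(2N))\approx \pi^2/(8N^2)$.
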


\begin{proof}
For \(-N+2\le k\le N-2\), we have
\[
(L_0P)_k
=
\frac12P_{k+1}
+
\frac12P_{k-1}
-
P_k,
\]
while
\[
(L_0P)_{-N+1}
=
\frac12P_{-N+2}
-
P_{-N+1},
\qquad
(L_0P)_{N-1}
=
\frac12P_{N-2}
-
P_{N-1}.
\]
Equivalently, with the convention
\[
P_{-N}=P_N=0,
\]
we may write, for every \(-N+1\le k\le N-1\),
\[
(L_0P)_k
=
\frac12P_{k+1}
+
\frac12P_{k-1}
-
P_k.
\]

Therefore,
\[
\begin{aligned}
\sum_{k=-N+1}^{N-1}P_k(L_0P)_k
&=
\sum_{k=-N+1}^{N-1}
P_k\left(
\frac12P_{k+1}
+\frac12P_{k-1}
-P_k
\right)
\\
&=
-\sum_{k=-N+1}^{N-1}P_k^2
+
\frac12\sum_{k=-N+1}^{N-1}P_kP_{k+1}
+
\frac12\sum_{k=-N+1}^{N-1}P_kP_{k-1}.
\end{aligned}
\]

Using \(P_{-N}=P_N=0\), the two cross sums are equal to
\[
\sum_{k=-N+1}^{N-2}P_kP_{k+1}.
\]
Hence
\[
\sum_{k=-N+1}^{N-1}P_k(L_0P)_k
=
-\sum_{k=-N+1}^{N-1}P_k^2
+
\sum_{k=-N+1}^{N-2}P_kP_{k+1}.
\]

On the other hand,
\[
\begin{aligned}
-\frac12
\sum_{k=-N}^{N-1}
(P_{k+1}-P_k)^2
&=
-\frac12
\sum_{k=-N}^{N-1}
\left(
P_{k+1}^2+P_k^2-2P_kP_{k+1}
\right)
\\
&=
-\sum_{k=-N+1}^{N-1}P_k^2
+
\sum_{k=-N+1}^{N-2}P_kP_{k+1},
\end{aligned}
\]
again because \(P_{-N}=P_N=0\). Therefore,
\[
\sum_{k=-N+1}^{N-1}P_k(L_0P)_k
=
-\frac12
\sum_{k=-N}^{N-1}
(P_{k+1}-P_k)^2.
\]

Applying Lemma~\ref{discreetPoin}, we get
\[
\sum_{k=-N}^{N-1}
(P_{k+1}-P_k)^2
\ge
\frac{c}{N^2}
\sum_{k=-N+1}^{N-1}P_k^2.
\]
Consequently,
\[
\sum_{k=-N+1}^{N-1}P_k(L_0P)_k
\le
-\frac{c}{N^2}
\sum_{k=-N+1}^{N-1}P_k^2.
\]

\end{proof}

As a direct consequence, applying the inequality to the truncated probability vector
\[
\hat P(t)=(\hat P_{-N+1}(t),\ldots,\hat P_{N-1}(t)),
\]
we obtain
\[
\sum_{k=-N+1}^{N-1}\hat P_k(t)(L_0\hat P(t))_k
\le
-\frac{c}{N^2}
\sum_{k=-N+1}^{N-1}\hat P_k^2(t).
\]
 
Once we have proven the Poincaré inequality we can now state and prove the exponential decay property.

\begin{theorem}[Exponential decay]
There exist constants \(c,C,T_0>0\), independent of \(N\), such that for
\(t\ge T_0N^3\),
\[
C^{-1}e^{-Ct/N^2}
\le
\P(\tau_N>t)
\le
Ce^{-ct/N^2}.
\]
\end{theorem}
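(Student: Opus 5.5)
The plan is to run an $L^2$ energy argument for the upper bound and an eigenvector-projection argument for the lower bound, both carried out on the truncated vector $\hat P(t)$. The event $A_t^c$ is then handled separately through Lemma~\ref{Ac}.

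For the upper bound, set $H(t)=\sum_{k=-N+1}^{N-1}\hat P_k(t)^2$. Lemma~\ref{lem:kolmogorov-system} gives
\[
H'(t)=2\sum_k \hat P_k (L_0\hat P)_k+2\sum_k \hat P_k (R(t)\hat P)_k .
\]
I will bound the first term by Theorem~\ref{theo:limiting-poincare} and the second by Lemma~\ref{lem:perturbation-bound}, which together yield
\[
H'(t)\le -2\Bigl(\frac{c}{N^2}-\frac{CN}{t}\Bigr)H(t).
\]
For $t\ge T_0N^3$ with $T_0$ large enough that $CN/t\le c/(2N^2)$, this becomes $H'(t)\le -\frac{c}{N^2}H(t)$. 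Grönwall then gives $H(t)\le H(T_0N^3)e^{-c(t-T_0N^3)/N^2}\le e^{cT_0 N}e^{-ct/N^2}$, using $H\le 1$.

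The factor $e^{cT_0N}$ is not uniform in $N$, and I expect this to be the main obstacle. My first attempt is to absorb it into the rate: for $t\ge T_0N^3$ we have $cT_0N\le ct/(2N^2)$, so the bound becomes $H(t)\le e^{-ct/(2N^2)}$, with a new constant $c$.

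To pass from $H$ to probabilities, I will use Cauchy--Schwarz:
\[
\P(\tau_N>t,A_t)=\sum_k\hat P_k(t)\le\sqrt{2N}\,H(t)^{1/2}.
\]
On $t\ge T_0N^3$ we have $\sqrt{2N}\le e^{ct/(8N^2)}$, so the factor $\sqrt{2N}$ is absorbed into the exponential as well. For the complementary event I will use $\P(A_t^c)\le 2e^{-c_\varepsilon t}\le 2e^{-c_\varepsilon t/N^2}$ from Lemma~\ref{Ac}. Together these give the upper bound.

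For the lower bound, I will use the principal Dirichlet eigenvector of $L_0$, namely $\phi_k=\cos(\pi k/(2N))$, which is positive on the interior and has eigenvalue $-\lambda_1=-(1-\cos(\pi/(2N)))\asymp -N^{-2}$. Set $m(t)=\langle\phi,\hat P(t)\rangle\le\P(\tau_N>t)$. By the symmetry of $L_0$,
\[
m'(t)=-\lambda_1 m(t)+\langle\phi,R(t)\hat P(t)\rangle .
\]
I will show the perturbation is small relative to $m$. Concretely, write $R(t)=\frac{\rho}{2a(t)}D$, where $D$ is the explicit tridiagonal drift operator from \eqref{eq}, so that $\langle\phi,R\hat P\rangle=\frac{\rho}{2a}\langle D^{*}\phi,\hat P\rangle$. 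A computation gives $(D^*\phi)_k=k(\phi_{k+1}-\phi_{k-1})$ (up to sign), and $|\phi_{k+1}-\phi_{k-1}|\lesssim \phi_k/N+N^{-2}$, with the interior-versus-boundary comparison handled via $\sin$ near the edges. This yields $|D^*\phi|_k\lesssim\phi_k+ N^{-1}$.

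The additive $N^{-1}$ term is the second delicate point. I would control it by the mass near the boundary, or else crudely bound it by $\frac{CN}{t}\sum_k\hat P_k$, using a comparison $\sum_k\hat P_k\lesssim N\,m$ that is valid after a mixing time of order $N^2$ by the parabolic Harnack property. This gives $m'\ge-(\lambda_1+CN^2/t)m$, which equals $-O(N^{-2})m$ for $t\ge T_0N^3$ with $N\ge1$. Integrating, $m(t)\ge m(T_0N^3)e^{-C(t-T_0N^3)/N^2}$. Starting from a bulk initial condition, the crude lower bound $m(T_0N^3)\ge e^{-C T_0 N}$ (a diffusive survival estimate obtained from the same differential inequality started at time $N^2$) is again absorbed into $e^{-Ct/N^2}$ because $t\ge T_0N^3$.
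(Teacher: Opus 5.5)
\textbf{Upper bound.} Your upper bound is the paper's argument: the energy $H$, the Poincar\'e inequality for $L_0$, and the $\frac{CN}{t}H$ perturbation bound. You execute it more carefully than the paper does. You integrate from $T_0N^3$ rather than from a fixed time, and you absorb $e^{cT_0N}$ and $\sqrt{2N}$ using $t\ge T_0N^3$. Strictly, $cT_0N\le ct/(2N^2)$ needs $t\ge 2T_0N^3$, which is harmless after enlarging $T_0$.

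\textbf{Lower bound: the perturbation estimate.} The lower bound, however, does not close. Your estimate $|\phi_{k+1}-\phi_{k-1}|\lesssim\phi_k/N+N^{-2}$ fails in the boundary layer. Exactly, $\phi_{k+1}-\phi_{k-1}=-2\sin(\pi k/2N)\sin(\pi/2N)$, so $(D^*\phi)_k=-2k\sin(\pi k/2N)\sin(\pi/2N)$. Hence $|(D^*\phi)_{\pm(N-1)}|=(N-1)\sin(\pi/N)\to\pi$, whereas $\phi_{N-1}+N^{-1}=O(N^{-1})$. The correct statement is $|(D^*\phi)_k|\le\pi$, so the ratio $|(D^*\phi)_k|/\phi_k$ is of order $N$ near $\pm N$.

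More decisively, the inequality you arrive at, $m'\ge-(\lambda_1+CN^2/t)m$, is \emph{not} $-O(N^{-2})m$ on $t\ge T_0N^3$. There $CN^2/t$ is only bounded by $C/(T_0N)$. Integrating from $T_0N^3$ therefore costs a factor $(T_0N^3/t)^{CN^2}$, e.g.\ $2^{-CN^2}$ at $t=2T_0N^3$. No choice of $N$-independent constants absorbs this into $e^{-Ct/N^2}=e^{-2CT_0N}$.

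\textbf{Repair, and the remaining gap.} Combine $|(D^*\phi)_k|\le\pi$ with the elementary bound $\phi_k\ge\sin(\pi/2N)\ge 1/N$. This gives $\sum_k\hat P_k\le Nm$ at every time, so no parabolic Harnack inequality is needed, and it yields $|\langle\phi,R\hat P\rangle|\le\frac{CN}{t}m$. This is exactly the paper's estimate, which the paper obtains more directly from $\hat P\ge 0$ and $v_{k\pm1}\le Cv_k$. With it, $CN/t\le C/(T_0N^2)$ on $t\ge T_0N^3$, and $m(t)\ge m(T_0N^3)e^{-C(t-T_0N^3)/N^2}$ follows.

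The remaining step, $m(T_0N^3)\ge e^{-CT_0N}$, still does not follow from the same differential inequality started at time $N^2$. Even with the rate $CN/s$, one has $\int_{N^2}^{T_0N^3}CN\,ds/s=CN\log(T_0N)$. The resulting loss $e^{-CN\log(T_0N)}$ is not $\gtrsim e^{-CT_0N}$ uniformly in $N$. The paper's absorption of $(T_0/t)^{CN}$ has the same $N\log N$ loss at $t\asymp N^3$, so this route by itself only gives the lower bound for $t\gtrsim N^3\log N$.

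To reach $t\asymp T_0N^3$ you need something sharper. One option is an $N$-independent comparison $\sum_k\hat P_k\le C\,m$, a quantitative form of the boundary-mass control you mention. It would make the perturbation rate $C/t$ and the loss only polynomial. Alternatively, when $\rho\le 0$, use the sign information $(D^*\phi)_k\le 0$. This gives $\langle\phi,R\hat P\rangle\ge 0$ and hence $m'\ge-\lambda_1 m$ with no loss at all.
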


 We prove the exponential decay upper bound
\[
\P(\tau_N>t) \le C e^{-ct/N^2}.
\]

We have 
\begin{align*}
\P(\tau_N>t)=& \P(\tau_N>t, A_t^c)+\P(\tau_N>t,A_t) \\ \leq & 2e^{-c_\varepsilon t}+ \sum_{k=-N+1}^{N-1}\hat P_k(t).
\end{align*}
where for the first term we used Lemma \ref{Ac}.  
We consider the quadratic functional
\[
H(t):=\sum_{k=-N+1}^{N-1}\hat P_k^2(t).
\]

Using the evolution equation
\[
\frac{d}{dt}\hat P(t)=L_t\hat P(t),
\]
we compute
\[
\frac{d}{dt}H(t)
=
2\sum_{k=-N+1}^{N-1}\hat P_k(t)\frac{d}{dt}\hat P_k(t)
=
2\sum_{k=-N+1}^{N-1}\hat P_k(t)(L_t\hat P(t))_k.
\]

Using the decomposition
\[
L_t=L_0+R(t),
\]
we obtain
\[
\frac{d}{dt}H(t)
=
2\sum_{k=-N+1}^{N-1}\hat P_k(L_0\hat P)_k
+
2\sum_{k=-N+1}^{N-1}\hat P_k(R(t)\hat P(t))_k.
\]

From the Poincar\'e inequality (Theorem~\ref{theo:limiting-poincare}),
\[
\sum_{k=-N+1}^{N-1}\hat P_k(L_0\hat P)_k
\le
-\frac{c}{N^2}\sum_{k=-N+1}^{N-1}\hat P_k^2
=
-\frac{c}{N^2}H(t).
\]

From the perturbation bound,
\[
\left|\sum_{k=-N+1}^{N-1}\hat P_k(R(t)\hat P(t))_k\right|
\le
\frac{CN}{t}H(t).
\]

Combining these, we obtain
\[
\frac{d}{dt}H(t)
\le
-\frac{c}{N^2}H(t)
+
\frac{CN}{t}H(t).
\]

For $t$ sufficiently large, the perturbation term is dominated, and we get
\[
\frac{d}{dt}H(t)
\le
-\frac{c'}{N^2}H(t).
\]

Integrating this differential inequality yields
\[
H(t)\le H(T_0)\, e^{-c'(t-T_0)/N^2}.
\]

Finally, using the inequality
\[
\sum_{k=-N+1}^{N-1}\hat P_k(t)
\le
\sqrt{2N-1}\,\sqrt{H(t)}\le
\sqrt{2}\sqrt{N}\,\sqrt{H(t)},
\]
we conclude
\[
\P(\tau_N>t)
\le 2e^{-c_\varepsilon t}+e^{-\frac{c'(t-T_0)}{2N^2}}
\leq C e^{-c t/N^2}.
\]
for $t$ sufficiently large.

\section{Lower exponential bound}

We prove the lower bound for the truncated survival probability
\[
\P(\tau_N>t,A_t)
=
\sum_{k=-N+1}^{N-1}\hat P_k(t),
\]
where
\[
\tau_N=\inf\{t:|X_t|=N\}.
\]
Since
\[
\P(\tau_N>t)\ge \P(\tau_N>t,A_t),
\]
the same lower bound holds for \(\P(\tau_N>t)\).

At first we present a short lemma.

\begin{lemma}\label{lem:first-eigenvector}
The principal eigenpair of the limiting operator
\[
(L_0v)_k
=
\frac12v_{k+1}
+
\frac12v_{k-1}
-
v_k,
\qquad
-N+1\le k\le N-1,
\]
with Dirichlet boundary conditions
\[
v_{-N}=v_N=0,
\]
is given by
\[
v_k
=
\cos\left(\frac{\pi k}{2N}\right),
\qquad
-N+1\le k\le N-1,
\]
with eigenvalue
\[
\lambda_1
=
1-\cos\left(\frac{\pi}{2N}\right).
\]
Moreover,
\[
\lambda_1
=
\frac{\pi^2}{8N^2}
+
O(N^{-4}).
\]
\end{lemma}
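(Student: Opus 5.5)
I will verify the eigenpair directly, argue that it is the principal one, and then expand $\lambda_1$.

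\textbf{Eigenvalue equation.} Set $\theta=\pi/(2N)$ and $v_k=\cos(\theta k)$. The sum-to-product identity gives
\[
\tfrac12\cos(\theta(k+1))+\tfrac12\cos(\theta(k-1))=\cos(\theta k)\cos\theta .
\]
Hence, for every $-N+1\le k\le N-1$,
\[
(L_0v)_k=(\cos\theta-1)v_k=-\lambda_1 v_k .
\]
This uses the extended values $v_{\pm N}$ in the two boundary rows. Those values are $\cos(\pm\pi/2)=0$, so the Dirichlet conditions $v_{-N}=v_N=0$ hold automatically. The formula for $(L_0v)_{\pm(N-1)}$ in the proof of Theorem~\ref{theo:limiting-poincare} therefore agrees with the interior formula. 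So $v$ is an eigenvector of $L_0$ with eigenvalue $-\lambda_1$, which is the sense in which $\lambda_1$ is meant.

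\textbf{Principal eigenvalue.} Two standard routes show this is the top of the spectrum of the symmetric matrix $L_0$.

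The first route uses the full spectrum. For $j=1,\dots,2N-1$, the vectors
\[
v^{(j)}_k=\sin\!\left(\frac{\pi j(k+N)}{2N}\right)
\]
vanish at $k=\pm N$. By the same trigonometric identity they satisfy
\[
L_0v^{(j)}=-\left(1-\cos\tfrac{\pi j}{2N}\right)v^{(j)} .
\]
These are $2N-1$ distinct eigenvalues of a $(2N-1)\times(2N-1)$ matrix, so they form the whole spectrum. The largest is the one with $j=1$. For $j=1$ we have $\sin\bigl(\tfrac{\pi}{2}+\tfrac{\pi k}{2N}\bigr)=\cos\bigl(\tfrac{\pi k}{2N}\bigr)$, which is exactly $v$.

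The second route uses Perron--Frobenius. $L_0+I$ is nonnegative and irreducible (tridiagonal with positive off-diagonal entries). Also $v_k>0$ for $|k|<N$, since $|\pi k/(2N)|<\pi/2$. A strictly positive eigenvector of such a matrix must correspond to the Perron root. Either route suffices; I would present the first, since it also identifies the spectral gap.

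\textbf{Asymptotics.} Taylor expansion gives
\[
1-\cos\theta=\frac{\theta^2}{2}-\frac{\theta^4}{24}+\dots
\]
With $\theta=\pi/(2N)$ this is $\pi^2/(8N^2)+O(N^{-4})$.

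The only point needing care is the boundary rows, which is handled by the convention $P_{\pm N}=0$ above. I do not expect any real obstacle in this proof.
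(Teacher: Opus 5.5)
Your proof is correct. The verification that $L_0v=-\lambda_1v$ (via the sum-to-product identity, with $v_{\pm N}=\cos(\pm\pi/2)=0$ in the boundary rows) and the Taylor expansion match the paper's argument.

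Where you differ is in justifying that this eigenpair is \emph{principal}. The paper only makes the even ansatz $v_k=\cos(\theta k)$, imposes $\cos(\theta N)=0$, and takes the smallest positive root $\theta=\pi/(2N)$. That finds the best eigenvalue within the cosine family. It never rules out the odd modes $\sin(m\pi k/N)$, nor shows that the ansatz exhausts the spectrum, so ``principal'' is asserted rather than proved.

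Your first route uses the shifted sine basis $\sin\bigl(\pi j(k+N)/(2N)\bigr)$, $j=1,\dots,2N-1$, which diagonalizes $L_0$ completely. Its even-$j$ members are exactly the odd modes the paper omits. So it proves the claim, shows the eigenvalue is simple, and also gives the next eigenvalue $-(1-\cos(\pi/N))$.

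Your Perron--Frobenius route is shorter and uses only the positivity of $v$, which the paper mentions but does not exploit. It is also the argument that would survive for operators without an explicit spectrum.

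Either route, combined with the Rayleigh quotient for the symmetric matrix $L_0$, gives $\langle P,L_0P\rangle\le-\lambda_1\|P\|^2$ with the optimal constant $\lambda_1\approx\pi^2/(8N^2)$. This is sharper than the constant $1/(8N^2)$ obtained from Lemma~\ref{discreetPoin} in Theorem~\ref{theo:limiting-poincare}.
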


\begin{proof}
Consider the eigenvectors
\[
v_k=\cos(\theta k),
\]
for some \(\theta>0\).

Using the trigonometric identity
\[
\cos(a+b)+\cos(a-b)
=
2\cos(a)\cos(b),
\]
we obtain
\[
\begin{aligned}
(L_0v)_k
&=
\frac12v_{k+1}
+\frac12v_{k-1}
-v_k
\\
&=
\frac12\cos(\theta(k+1))
+\frac12\cos(\theta(k-1))
-\cos(\theta k)
\\
&=
\frac12\Bigl(
\cos(\theta(k+1))
+\cos(\theta(k-1))
\Bigr)
-\cos(\theta k)
\\
&=
\cos(\theta)\cos(\theta k)
-
\cos(\theta k)
\\
&=
(\cos\theta-1)\cos(\theta k).
\end{aligned}
\]
Hence
\[
L_0v
=
-(1-\cos\theta)v,
\]
so that the corresponding eigenvalue is
\[
\lambda
=
1-\cos\theta.
\]

The Dirichlet boundary conditions require
\[
v_{-N}=v_N=0,
\]
that is,
\[
\cos(\theta N)=0.
\]
The smallest positive solution is
\[
\theta=\frac{\pi}{2N},
\]
which yields the positive eigenvector
\[
v_k
=
\cos\left(\frac{\pi k}{2N}\right).
\]

Finally, using the Taylor expansion
\[
\cos x
=
1-\frac{x^2}{2}+O(x^4),
\]
we conclude that
\[
\lambda_1
=
1-\cos\left(\frac{\pi}{2N}\right)
=
\frac{\pi^2}{8N^2}
+
O(N^{-4}),
\]
which completes the proof.
\end{proof}

The  computation is the classical spectral decomposition of the one-dimensional discrete Dirichlet Laplacian; see, for example, Levin, Peres and Wilmer \cite{LevinPeresWilmer2017}.

Let \(v=(v_k)_{k=-N+1}^{N-1}\) be the positive eigenvector of \(L_0\)
corresponding to the principal eigenvalue \(-\lambda_1\). Explicitly,
\[
v_k=
\cos\left(\frac{\pi k}{2N}\right),
\qquad -N+1\le k\le N-1.
\]
Then
\[
L_0v=-\lambda_1 v,
\]
where
\[
\lambda_1
=
1-\cos\left(\frac{\pi}{2N}\right)
\asymp \frac1{N^2}.
\]
We normalize \(v\) so that
\[
0<v_k\le 1.
\]

Define
\[
\phi(t)
=
\langle \hat P(t),v\rangle
=
\sum_{k=-N+1}^{N-1}\hat P_k(t)v_k.
\]
Since \(v_k>0\), we have \(\phi(t)>0\). Moreover, since \(v_k\le1\),
\[
\phi(t)
\le
\sum_{k=-N+1}^{N-1}\hat P_k(t)
=
\P(\tau_N>t,A_t).
\]
Therefore,
\[
\P(\tau_N>t,A_t)\ge \phi(t).
\]

Differentiating \(\phi(t)\), we obtain
\[
\frac{d}{dt}\phi(t)
=
\left\langle L_t\hat P(t),v\right\rangle.
\]
Using the decomposition
\[
L_t=L_0+R(t),
\]
we get
\[
\frac{d}{dt}\phi(t)
=
\left\langle L_0\hat P(t),v\right\rangle
+
\left\langle R(t)\hat P(t),v\right\rangle.
\]
Since \(L_0\) is symmetric and \(L_0v=-\lambda_1v\),
\[
\left\langle L_0\hat P(t),v\right\rangle
=
\left\langle \hat P(t),L_0v\right\rangle
=
-\lambda_1\phi(t).
\]
Hence
\[
\frac{d}{dt}\phi(t)
=
-\lambda_1\phi(t)
+
\left\langle R(t)\hat P(t),v\right\rangle.
\]

It remains to control the perturbation term. From the perturbation estimate,
\[
|(R(t)\hat P(t))_k|
\le
\frac{CN}{t}
\bigl(\hat P_{k-1}(t)+\hat P_{k+1}(t)\bigr),
\]
with the convention that terms outside
\(\{-N+1,\ldots,N-1\}\) are omitted. Therefore,
\[
\begin{aligned}
\left|
\left\langle R(t)\hat P(t),v\right\rangle
\right|
&\le
\frac{CN}{t}
\sum_{k=-N+1}^{N-1}
\bigl(\hat P_{k-1}(t)+\hat P_{k+1}(t)\bigr)v_k.
\end{aligned}
\]

Changing indices, we obtain
\[
\sum_{k=-N+1}^{N-1}\hat P_{k+1}(t)v_k
=
\sum_{k=-N+2}^{N-1}\hat P_k(t)v_{k-1},
\]
and
\[
\sum_{k=-N+1}^{N-1}\hat P_{k-1}(t)v_k
=
\sum_{k=-N+1}^{N-2}\hat P_k(t)v_{k+1}.
\]

For the first Dirichlet eigenvector on the symmetric interval,
neighboring coordinates are comparable in the sense that there exists a
constant \(C>0\), independent of \(N\), such that
\[
v_{k+1}\le C v_k,
\qquad
v_{k-1}\le C v_k,
\]
for all admissible indices. Hence
\[
\sum_{k=-N+1}^{N-1}
\bigl(\hat P_{k-1}(t)+\hat P_{k+1}(t)\bigr)v_k
\le
C
\sum_{k=-N+1}^{N-1}
\hat P_k(t)v_k.
\]
Consequently,
\[
\left|
\left\langle R(t)\hat P(t),v\right\rangle
\right|
\le
\frac{CN}{t}
\phi(t).
\]

Therefore,
\[
\frac{d}{dt}\phi(t)
\ge
-\left(\lambda_1+\frac{CN}{t}\right)\phi(t).
\]
Integrating from \(T_0\) to \(t\), we obtain
\[
\phi(t)
\ge
\phi(T_0)
\exp\left(-\lambda_1(t-T_0)\right)
\left(\frac{T_0}{t}\right)^{CN}.
\]
Since \(\lambda_1\asymp N^{-2}\), and since \(t\ge T_0N^3\), the polynomial
correction can be absorbed into the exponential term after adjusting constants.
Thus,
\[
\phi(t)\ge C^{-1}e^{-Ct/N^2}.
\]
Finally, since
\[
\P(\tau_N>t)\ge \P(\tau_N>t,A_t)\ge \phi(t),
\]
we conclude that
\[
\P(\tau_N>t)
\ge
C^{-1}e^{-Ct/N^2}.
\]


\begin{thebibliography}{99}

 


\bibitem{BakryGentilLedoux2014}
D.~Bakry, I.~Gentil, and M.~Ledoux,
\newblock {\em Analysis and Geometry of Markov Diffusion Operators},
\newblock Springer, 2014.


\bibitem{BaurBertoin2016}
E.~Baur, J.~Bertoin,
Elephant random walks and their connection to P\'olya-type urns,
\emph{Phys. Rev. E}, 94 (2016), 052134.

\bibitem{BenaimLeBoudec2008}
M.~Bena\"im, J.-Y.~Le Boudec,
A class of mean field interaction models,
\emph{Performance Evaluation}, 65 (2008), 823--838.

\bibitem{Bercu2017}
B.~Bercu,
A martingale approach for the Elephant random walk,
\emph{J. Phys. A}, 51 (2017), 015201.

 

\bibitem{Bercu2022stops}
B. Bercu, \emph{On the elephant random walk with stops playing hide and seek with the Mittag--Leffler distribution}, Journal of Statistical Physics, 2022.

\bibitem{BercuChabanolRuch2019}
B.~Bercu, M.~L.~Chabanol, J.~J.~Ruch,
Hypergeometric identities arising from the elephant random walk,
\emph{J. Math. Anal. Appl.}, 480 (2019), 123360.

\bibitem{BercuLaulin2021}
B.~Bercu, L.~Laulin,
Center of mass of the elephant random walk,
\emph{Stochastic Process. Appl.}, 2021.

 \bibitem{BercuLaulin2024}
B. Bercu, L. Laulin, \emph{How to estimate the memory of the elephant random walk}, Communications in Statistics, 2024.

 \bibitem{Bertenghi2021}
M. Bertenghi, \emph{Asymptotic normality of superdiffusive step-reinforced random walks}, arXiv:2101.00906, 2021.


\bibitem{BertenghiRosales2022}
M. Bertenghi, A. Rosales-Ortiz, \emph{Joint invariance principles for random walks with positively and negatively reinforced steps}, Journal of Statistical Physics, 2022.

\bibitem{Bertoin2022zeros}
J. Bertoin, \emph{Counting the zeros of an elephant random walk}, Transactions of the American Mathematical Society, 2022.





\bibitem{Bertoin2020}
J. Bertoin, \emph{Universality of noise reinforced Brownian motions}, In and Out of Equilibrium 3, Springer, 2020.



\bibitem{ChauvinPouyanneSahnoun2011}
B.~Chauvin, N.~Pouyanne, R.~Sahnoun,
Limit distributions for large P\'olya urns,
\emph{Ann. Appl. Probab.}, 21 (2011), 1--32.


\bibitem{Coletti2017a}
C.~F. Coletti, R.~Gava, and G.~M. Sch\"utz,
\newblock Central limit theorem for the elephant random walk,
\newblock {\em Journal of Mathematical Physics}, 58(5):053303, 2017.

\bibitem{Coletti2017b}
C.~F. Coletti, R.~Gava, and G.~M. Sch\"utz,
\newblock A strong invariance principle for the elephant random walk,
\newblock {\em Journal of Statistical Mechanics}, 2017(12):123207, 2017.



\bibitem{ColettiPapageorgiou2019}
C.~F. Coletti and I.~Papageorgiou,
\newblock Asymptotic analysis of the elephant random walk,
\newblock {\em J. Stat. Mech.: Theory Exp.} 1 (2021).

 

 





\bibitem{Fang2025}
Z. Fang, \emph{How long does it take an elephant random walk to forget its training},   arXiv:2509.15049, 2025.

\bibitem{Fang2024}
Z. Fang, \emph{How often does a critical elephant random walk return to origin}, Electronic Communications in Probability, 2024.

 

\bibitem{GuerinLaulinRaschelPoly}
H. Gu{\'e}rin, L. Laulin, K. Raschel, \emph{Elephant polynomials}, Aequationes Mathematicae, 2025.


\bibitem{GuerinLaulinRaschel2025}
H. Gu{\'e}rin, L. Laulin, K. Raschel, \emph{On the limit law of the superdiffusive elephant random walk}, Electronic Journal of Probability, 2025.


\bibitem{GuerinLaulinRaschel2023}
H.~Gu\'erin, L.~Laulin, K.~Raschel,
Fixed-point equation for superdiffusive ERW,
\emph{arXiv:2308.14630}, 2023.


\bibitem{Guevara2021}
V. V. Guevara, H. C. Su{\'a}rez, L. M. A. Aguilar, \emph{A strategy to improve learning via a minimal random walk}, 2021.





\bibitem{Guevara2019}
V.~V.~Guevara,
On the almost sure central limit theorem for the elephant random walk,
\emph{J. Phys. A}, 52 (2019), 475201.



\bibitem{GuoWu2022}
J. Guo, Z. Wu, \emph{Exploring trapping problem on simplicial networks encoding higher-order interactions}, Modern Physics Letters B, 2022.

\bibitem{GuoWu2021}
J. Guo, Z. Wu, \emph{Impact of delay phenomenon on random walks in weighted networks}, International Journal of Modern Physics B, 2021.


\bibitem{GutStadtmuellerReview}
A.~Gut, U.~Stadtm\"uller,
Elephant random walks; a review,
\emph{Ann. Univ. Sci. Budapest.}, 54 (2023), 171--198.

\bibitem{GutStadtmuellerVariations}
A.~Gut, U.~Stadtm\"uller,
Variations of the elephant random walk,
\emph{Modern Stochastics}, 5 (2018), 1--18.

\bibitem{GutStadtmuellerDelays}
A.~Gut, U.~Stadtm\"uller,
Elephant random walks with delays,
\emph{Stat. Probab. Lett.}, 174 (2021), 109105.




 


 


\bibitem{Laulin2022martingale}
L. Laulin, \emph{New insights on the reinforced elephant random walk using a martingale approach}, Journal of Statistical Physics, 2022.



\bibitem{Laulin2022}
L. Laulin, \emph{Introducing smooth amnesia to the memory of the elephant random walk}, Electronic Communications in Probability, 2022.



\bibitem{Laulin2022thesis}
L. Laulin, \emph{About the elephant random walk}, PhD thesis, 2022.


 
\bibitem{LevinPeresWilmer2017}
D. A. Levin, Y. Peres, E. L. Wilmer,
\emph{Markov Chains and Mixing Times},
Second Edition,
American Mathematical Society, 2017.

\bibitem{MaulikRoySadhukhan2025}
K. Maulik, P. Roy, T. Sadhukhan, \emph{Phase transitions for elephant random walks with two memory channels}, arXiv, 2025.

\bibitem{MaulikRoySadhukhan2024}
K. Maulik, P. Roy, T. Sadhukhan, \emph{Asymptotic properties of generalized elephant random walks}, arXiv:2406.19383, 2024.

 

\bibitem{Mukherjee2025}
S. Mukherjee, \emph{Elephant random walks on infinite Cayley trees}, arXiv, 2025.

\bibitem{NajPapSil}
F.A.~Najman, I.~Papageorgiou, S.C.A.~da~Silveira,
\textit{Spiking Neural Networks with Elephant Reinforcement}.
\newblock \emph{arXiv:2608.12839}, 2026.

\bibitem{Papag26}
I.~Papageorgiou,
\newblock \emph{Elephant Reinforced Galves-L\"ocherbach Networks},
\newblock arXiv:2608.10183, 2026.



 

\bibitem{SchutzTrimper2004}
G.~M.~Sch\"utz, S.~Trimper,
Elephants can always remember,
\emph{Phys. Rev. E}, 70 (2004), 045101.








\bibitem{Qin2024}
S. Qin, \emph{Step-reinforced random walks and one-half}, arXiv:2402.16396, 2024.


\bibitem{Qin2025phase}
S. Qin, \emph{Recurrence-transience phase transition of the step-reinforced random walk at 1/2}, Probability Theory and Related Fields, 2025.

 

\bibitem{Qin2025}
S.~Qin,
Recurrence and transience of multidimensional elephant random walks,
\emph{Ann. Probab.}, 53 (2025), 1049--1078.

\end{thebibliography}
\end{document}